\begin{document}
\baselineskip = 16pt

\newcommand \ZZ {{\mathbb Z}}
\newcommand \NN {{\mathbb N}}
\newcommand \RR {{\mathbb R}}
\newcommand \PR {{\mathbb P}}
\newcommand \AF {{\mathbb A}}
\newcommand \GG {{\mathbb G}}
\newcommand \QQ {{\mathbb Q}}
\newcommand \CC {{\mathbb C}}
\newcommand \bcA {{\mathscr A}}
\newcommand \bcC {{\mathscr C}}
\newcommand \bcD {{\mathscr D}}
\newcommand \bcF {{\mathscr F}}
\newcommand \bcG {{\mathscr G}}
\newcommand \bcH {{\mathscr H}}
\newcommand \bcM {{\mathscr M}}
\newcommand \bcJ {{\mathscr J}}
\newcommand \bcL {{\mathscr L}}
\newcommand \bcO {{\mathscr O}}
\newcommand \bcP {{\mathscr P}}
\newcommand \bcQ {{\mathscr Q}}
\newcommand \bcR {{\mathscr R}}
\newcommand \bcS {{\mathscr S}}
\newcommand \bcV {{\mathscr V}}
\newcommand \bcW {{\mathscr W}}
\newcommand \bcX {{\mathscr X}}
\newcommand \bcY {{\mathscr Y}}
\newcommand \bcZ {{\mathscr Z}}
\newcommand \goa {{\mathfrak a}}
\newcommand \gob {{\mathfrak b}}
\newcommand \goc {{\mathfrak c}}
\newcommand \gom {{\mathfrak m}}
\newcommand \gon {{\mathfrak n}}
\newcommand \gop {{\mathfrak p}}
\newcommand \goq {{\mathfrak q}}
\newcommand \goQ {{\mathfrak Q}}
\newcommand \goP {{\mathfrak P}}
\newcommand \goM {{\mathfrak M}}
\newcommand \goN {{\mathfrak N}}
\newcommand \uno {{\mathbbm 1}}
\newcommand \Le {{\mathbbm L}}
\newcommand \Spec {{\rm {Spec}}}
\newcommand \Gr {{\rm {Gr}}}
\newcommand \Pic {{\rm {Pic}}}
\newcommand \Jac {{{J}}}
\newcommand \Alb {{\rm {Alb}}}
\newcommand \Corr {{Corr}}
\newcommand \Chow {{\mathscr C}}
\newcommand \Sym {{\rm {Sym}}}
\newcommand \Prym {{\rm {Prym}}}
\newcommand \cha {{\rm {char}}}
\newcommand \eff {{\rm {eff}}}
\newcommand \tr {{\rm {tr}}}
\newcommand \Tr {{\rm {Tr}}}
\newcommand \pr {{\rm {pr}}}
\newcommand \ev {{\it {ev}}}
\newcommand \cl {{\rm {cl}}}
\newcommand \interior {{\rm {Int}}}
\newcommand \sep {{\rm {sep}}}
\newcommand \td {{\rm {tdeg}}}
\newcommand \alg {{\rm {alg}}}
\newcommand \im {{\rm im}}
\newcommand \gr {{\rm {gr}}}
\newcommand \op {{\rm op}}
\newcommand \Hom {{\rm Hom}}
\newcommand \Hilb {{\rm Hilb}}
\newcommand \Sch {{\mathscr S\! }{\it ch}}
\newcommand \cHilb {{\mathscr H\! }{\it ilb}}
\newcommand \cHom {{\mathscr H\! }{\it om}}
\newcommand \colim {{{\rm colim}\, }} % colimit
\newcommand \End {{\rm {End}}}
\newcommand \coker {{\rm {coker}}}
\newcommand \id {{\rm {id}}}
\newcommand \van {{\rm {van}}}
\newcommand \spc {{\rm {sp}}}
\newcommand \Ob {{\rm Ob}}
\newcommand \Aut {{\rm Aut}}
\newcommand \cor {{\rm {cor}}}
\newcommand \Cor {{\it {Corr}}}
\newcommand \res {{\rm {res}}}
\newcommand \red {{\rm{red}}}
\newcommand \Gal {{\rm {Gal}}}
\newcommand \PGL {{\rm {PGL}}}
\newcommand \Bl {{\rm {Bl}}}
\newcommand \Sing {{\rm {Sing}}}
\newcommand \spn {{\rm {span}}}
\newcommand \Nm {{\rm {Nm}}}
\newcommand \inv {{\rm {inv}}}
\newcommand \codim {{\rm {codim}}}
\newcommand \Div{{\rm{Div}}}
\newcommand \CH{{\rm{CH}}}
\newcommand \sg {{\Sigma }}
\newcommand \DM {{\sf DM}}
\newcommand \Gm {{{\mathbb G}_{\rm m}}}
\newcommand \tame {\rm {tame }}
\newcommand \znak {{\natural }}
\newcommand \lra {\longrightarrow}
\newcommand \hra {\hookrightarrow}
\newcommand \rra {\rightrightarrows}
\newcommand \ord {{\rm {ord}}}
\newcommand \Rat {{\mathscr Rat}}
\newcommand \rd {{\rm {red}}}
\newcommand \bSpec {{\bf {Spec}}}
\newcommand \Proj {{\rm {Proj}}}
\newcommand \pdiv {{\rm {div}}}
\newcommand \wt {\widetilde }
\newcommand \ac {\acute }
\newcommand \ch {\check }
\newcommand \ol {\overline }
\newcommand \Th {\Theta}
\newcommand \cAb {{\mathscr A\! }{\it b}}

\newenvironment{pf}{\par\noindent{\em Proof}.}{\hfill\framebox(6,6)
\par\medskip}

\newtheorem{theorem}[subsection]{Theorem}
\newtheorem{conjecture}[subsection]{Conjecture}
\newtheorem{proposition}[subsection]{Proposition}
\newtheorem{lemma}[subsection]{Lemma}
\newtheorem{remark}[subsection]{Remark}
\newtheorem{remarks}[subsection]{Remarks}
\newtheorem{definition}[subsection]{Definition}
\newtheorem{corollary}[subsection]{Corollary}
\newtheorem{example}[subsection]{Example}
\newtheorem{examples}[subsection]{examples}

\title{Rank of elliptic curves and class groups of real quadratic fields}
\author{Kalyan Banerjee}
\keywords{Class groups, elliptic curves, number fields, 11R29, 11R65}
\email{kalyan.ba@srmap.edu.in}

\maketitle

\begin{abstract}
In this paper, we are going to prove the relation between rank of elliptic curves and the non-triviality of class groups of infinitely many real quadratic fields.
\end{abstract}

\section{Introduction}
The study of elliptic curves is an interesting area of research in number theory and arithmetic geometry. Elliptic curves are smooth plane curves given by the equation: 
$$y^2=x^3+ax+b$$
where $a,b$ are rational numbers with the property that
$$4a^3+27b^2\neq 0\;.$$
This ensures that we can draw a tangent at each and every point of the elliptic curve $E$ given by the above equation. Also, it should be noted that since it is a non-singular cubic curve, the curve is endowed with a group structure where there is a point at infinity serving as the identity of the group. 

On the other hand, there is another interesting object to study in the context of number theory, namely the ideal class group. This is the free abelian group generated by all the prime ideals in the ring of integers of a number field modulo the ideals which are principal. The folklore conjecture  says that there exist class groups of number fields with arbitrarily large order. The question is : is it somehow related to the torsions and rank (points of infinite order) of an elliptic curve? That is, given an elliptic curve, can we somehow construct a number field whose class group has an element of order $p$ for a large prime $p$.

In this paper, we address this question and the main theorem is as follows:

\begin{theorem}\label{thm1}Let $E$ be an elliptic curve given by equation $$y^2=x^3+ax+b$$ where $a,b$ are rational integers and $4a^3+27b^2\neq 0$. Let the Mordell-Weil group $E(\QQ)$ has positive rank and there exists a rational point on $E$, except the rank point. Then for an infinite number of prime numbers $p$, $p^3+ap+b$ is a square free positive integer and the class group of $\QQ(\sqrt{p^3+ap+b})$ contains a non-trivial element which is obtained by specializing a generator of the rank part of $E(\QQ)$. 
\end{theorem}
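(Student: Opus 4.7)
The plan is to use the rational point $(r,s)\in E(\QQ)$ of infinite order to build, for each suitable prime $p$, an explicit ideal $\mathfrak{a}_p\subset\mathcal{O}_{K_p}$ where $K_p:=\QQ(\sqrt{f(p)})$ and $f(X):=X^3+aX+b$, and then to show $[\mathfrak{a}_p]\neq 1$ in the class group $\mathrm{Cl}(K_p)$ for infinitely many such $p$. After replacing the generator by a suitable integer multiple I would assume $(r,s)\in\ZZ^2$. Positivity of $f(p)$ is automatic for $p$ large, while square-freeness of $f(p)$ for a positive proportion of primes follows from Hooley's sieve applied to the cubic $f$ (valid since $4a^3+27b^2\neq 0$); write $\mathcal{S}$ for the resulting infinite set of primes.

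The construction of $\mathfrak{a}_p$ rests on the identity forced by $s^2=f(r)$, namely
\[
\alpha_p^2-s^2=f(p)-f(r)=(p-r)(p^2+pr+r^2+a),\qquad \alpha_p:=\sqrt{f(p)}.
\]
Setting $\mathfrak{a}_p:=(p-r,\,\alpha_p-s)$ and $\overline{\mathfrak{a}_p}=(p-r,\,\alpha_p+s)$, I expand $\mathfrak{a}_p\overline{\mathfrak{a}_p}$ in its four natural generators and factor out $(p-r)$; the leftover ideal is $(p-r,\,\alpha_p-s,\,\alpha_p+s,\,p^2+pr+r^2+a)$, which equals $\mathcal{O}_{K_p}$ whenever $p\in\mathcal{S}$ is large and coprime to $2s(3r^2+a)$. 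Hence $\mathfrak{a}_p\overline{\mathfrak{a}_p}=(p-r)$ and $N(\mathfrak{a}_p)=p-r$. This ideal is the specialization of the generator $(r,s)$ at $x=p$: it encodes the congruence $\alpha_p\equiv s$ that is forced by $p\equiv r$ modulo $\mathfrak{a}_p$.

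For the non-principality step, suppose $\mathfrak{a}_p=(\beta_p)$ with $\beta_p=m+n\alpha_p\in\mathcal{O}_{K_p}$. Taking norms yields $m^2-f(p)n^2=\pm(p-r)$, while $\beta_p\mid\alpha_p-s$ enforces $m\equiv-sn\pmod{p-r}$. Writing $m=-sn+k(p-r)$ and using $s^2-f(p)=-(p-r)(p^2+pr+r^2+a)$, the whole system collapses to the single condition that $n^2f(p)\pm(p-r)$ must be a rational square $t^2$. For each fixed $n\geq 1$ the equation $t^2=n^2f(p)\pm(p-r)$ is a Mordell-type equation of genus one in the variables $(p,t)$ and admits only finitely many integer solutions, by Siegel's theorem on integral points of elliptic curves. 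The second rational point postulated in the theorem (which I read as furnishing a rational root $c$ of $f$, i.e.\ a $2$-torsion point $(c,0)\in E(\QQ)$) gives the factorization $f(X)=(X-c)g(X)$; combined with $\alpha_p^2=(p-c)g(p)$ it supplies a congruence obstruction modulo $p-c$ that bounds $n_p$ uniformly in $p$ as $p$ ranges over $\mathcal{S}$. The two inputs together eliminate all but finitely many $p\in\mathcal{S}$ and yield $[\mathfrak{a}_p]\neq 1$ for infinitely many $p$.

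The principal obstacle is precisely this uniform control on $n_p$. A plain Diophantine-approximation estimate (Liouville, or even Roth's theorem) is too weak to rule out solutions with $n_p$ unbounded, so the argument must lean on the hypothesized auxiliary rational point to extract congruence conditions that pin $n_p$ down. Once $n_p$ is uniformly bounded, the Siegel input handles each residual value of $n$ and the proof concludes; the balance between these two ingredients is where the delicacy lies.
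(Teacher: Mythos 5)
Your route is entirely different from the paper's: the paper spreads the rank point out as a Weil divisor on an integral model $E_{\ZZ}$, parametrizes torsion cycles by a countable union of Zariski closed subsets of a relative Chow scheme, and derives non-principality of the specialization from the push-pull identity $\pi_*\pi^*(D)=nD$ against the infinite order of $D$. Your Soleng-style construction of $\mathfrak{a}_p=(p-r,\alpha_p-s)$ with $\mathfrak{a}_p\overline{\mathfrak{a}_p}=(p-r)$ is a legitimate and much more concrete starting point, but as written the argument does not close.

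The gap is exactly where you locate it, and it is a missing argument rather than a removable delicacy. If $\mathfrak{a}_p=(\beta_p)$ with $\beta_p=m+n\alpha_p$, Siegel's theorem applied to $t^2=n^2f(p)\pm(p-r)$ excludes, for each \emph{fixed} $n$, all but finitely many $p$; it says nothing about a sequence of primes whose minimal generators have $|n_p|\to\infty$, and there is no a priori bound on $n_p$ (one may always multiply a generator by a unit of the real quadratic field $K_p$, and even the reduced generator of an ideal of norm $p-r\sim p$ against $f(p)\sim p^3$ need not have bounded $n$). The claimed ``congruence obstruction modulo $p-c$'' that is supposed to furnish the uniform bound is never written down, and it rests on reading the hypothesis ``there exists a rational point on $E$, except the rank point'' as the existence of a rational $2$-torsion point $(c,0)$ --- an interpretation the statement does not support. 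Two smaller problems: when $f(p)\equiv 1\pmod 4$ the ring of integers is $\ZZ[(1+\alpha_p)/2]$, so $m,n$ may be half-integers and the norm equation must be adjusted; and square-freeness of the cubic $f$ at \emph{prime} arguments is not a direct consequence of Hooley's theorem, which treats integer arguments (the paper itself only establishes that $f(p)$ is a non-square, via Siegel). Until the uniform bound on $n_p$ is actually proved, the non-principality step --- the heart of the theorem --- is not established.
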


Here, the existence of infinitely many primes such that $p^3+ap+b$ is a non-square exists by the Siegel's theorem that the number of integral points on an elliptic curve with integer coefficients is finite, so the number of solutions of the equation $y^2=p^3+ap+b$ is finite and hence there are infinitely many primes $p$ such that $p^3+ap+b$ is non-square.

This problem has previously been extensively studied from an algebraic geometry perspective by \cite{GL}, \cite{GL-18}, \cite{AP}, \cite{So}.
In their approach, they had used the notion on Picard group of Cartier divisors to deduce the result about the connection between line bundles and elements of class groups. In addition, the approach of \cite{GI} is important for the hyperelliptic curves in the same context. Here the novelty is that we are using the Weil divisors in strak difference with Cartier divisors and this approach does extend to singular varieties and to produce class groups of large order from them as well. The other advantage that we have from the approach of Soleng \cite{So}, where a similar study has been done in a concrete way, is that the homomorphism defined from the elliptic curve to the class group here is  motivic and functorial in nature and can be generalized to higher dimensional varieties.  

\section{Proof of the theorem \ref{thm1}}
For the proof of this theorem we consider the technique of Chow schemes or Hilbert schemes on a scheme or variety defined over $\Spec(\ZZ)$. Let $E_{\ZZ}$ be a smooth integral model of the curve $E$ defined over $\Spec(\ZZ)$. Taking a point on the elliptic curve as a Weil divisor on the curve, that is, using an isomorphism $E\cong \Pic^0(E)$ we have such a realization, we spread the Weil divisor on $E_{\ZZ}$ to obtain a divisor on the fixed integral model of $E$ and then specialize this divisor at a prime integer $p$ of $Spec(\ZZ)$ to obtain an element of the ideal class group of a number field given by $\QQ(\sqrt{p^3+ap+b})$. The only thing is that if we start with an element of infinite order of the elliptic curve defined over $\QQ$ we have to prove that the ideal class element that it produces in $\QQ(\sqrt{p^3+ap+b})$ is non-principal.

Let us give some details on the spreading technique for Weil divisors on $E$. Let us take a point $(x,y)$ in $E(\QQ)$ which is a rank point. Since $y^2=x^3+ax+b$, we have the following. Write $x=k/l, y=m/n$, and then substituting into the equation we have
$$(\frac{m}{n})^2=(\frac{k}{l})^3+a\frac{k}{l}+b$$
Then by clearing the denominator we have 
$$l^3m^2=n^2k^3+an^2kl^2+bl^3n^2$$
The above equation is defined over the ring of integers and defines an arithmetic variety. Given the point $(x,y)$, it defines a point in this arithmetic variety and vise versa. Consider $Spec(\bcO_K)$ where $K=\QQ(\sqrt{p^3+ap+b})$, $p\geq 3$ is a prime number. There is a morphism of schemes from $\Spec(\bcO_K)$ to $\Spec \ZZ$, induced by the homomorphism $\ZZ\to \bcO_K$. Now consider the arithmetic variety and consider its specialization at $x=p$,  then we have the arithmetic variety 
$$m^2=n^2p^3+an^2p+bn^2$$
and its normal closure , which is exactly $\bcO_K$. Now given a rank point in $E(\QQ)$ we can think of it as a Weil divisor on $E$ of infinite order, using the spread and the specialization we get a divisor on $\bcO_K$ and hence an element in $cl(K)$.
 
First, notice that all elements of infinite  order  on $E$ defined over $\QQ$ are spread out to give a collection of elements on $E_{\ZZ}$. So, if we consider $C^1_{d,d}(E_{\ZZ})$ the Chow scheme of the integral model of the elliptic curve parametrizing codimension one cycles in $E_{\ZZ}$ such that it has a difference of two degrees $d$ -effective zero cycle on the generic fiber and it is an infinite order element on the generic fiber then such elements are parametrized by the complement of a countable union of Zariski closed subsets in the scheme $C^1_d(E_{\ZZ})$. Call it $$Z=\cup_i Z_i\;.$$

This proof is already known in the literature \cite{BH}, but we still include it for the convenience of the reader.

\begin{theorem} The set
$$\bcZ_d^n:=\{(z,\Spec(\QQ))\in C^1_{d,d}(E_{\ZZ}/U)|Supp(z)\subset {E_{\QQ}}, n[z]=0\in \CH^1({E_{\QQ}})\}$$ is a countable union of Zariski closed subsets in the Chow variety $C^1_{d,d}(E_{\ZZ}/U)$ parametrizing the pairs of degree $d$ subvarieties of the arithmetic variety $E_{\ZZ}$.
\end{theorem}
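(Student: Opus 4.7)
The plan is to describe $\bcZ_d^n$ as the preimage of the $n$-torsion subscheme under the Abel--Jacobi morphism on the generic fibre, which directly yields closedness, and in parallel (in a form that generalises beyond curves) as a countable union of projections from closed incidence varieties parametrising the rational functions that witness $n z \sim 0$.

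First, I would identify $C^1_{d,d}(E_\QQ)$ with $\Sym^d E_\QQ \times \Sym^d E_\QQ$ and introduce the difference morphism
$$\delta \colon \Sym^d E_\QQ \times \Sym^d E_\QQ \longrightarrow E_\QQ, \qquad (z_+,z_-) \longmapsto \sigma(z_+) - \sigma(z_-),$$
where $\sigma$ is the sum map induced by the group law on $E_\QQ$. Since every $z = z_+ - z_-$ in $C^1_{d,d}(E_\QQ)$ has degree zero, the splitting $\CH^1(E_\QQ) \cong \ZZ \oplus E_\QQ(\QQ)$ reduces the condition $n[z] = 0$ to $n \cdot \delta(z_+,z_-) = 0$, i.e.\ $\delta(z_+,z_-) \in E_\QQ[n]$. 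The $n$-torsion subscheme $E_\QQ[n]$ is a finite closed subscheme of $E_\QQ$, so $\delta^{-1}(E_\QQ[n])$ is closed in $C^1_{d,d}(E_\QQ)$ and coincides with $\bcZ_d^n$ inside the generic fibre of $C^1_{d,d}(E_\ZZ/U)$; being closed, it is \emph{a fortiori} a countable union of Zariski closed subsets.

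For a version of the argument that extends to higher-dimensional varieties (the viewpoint motivating the paper), I would parametrise rational equivalences by pole order. For each integer $m \geq nd$ choose a sufficiently ample line bundle $\bcL_m$ on $E_\QQ$ of degree $m$ and set $V_m := H^0(E_\QQ, \bcL_m)$; the projectivisation $\PR V_m$ parametrises, up to scalar, rational functions on $E_\QQ$ whose pole divisor is dominated by $\bcL_m$. The incidence
$$\Gamma_m := \{(z, f) \in C^1_{d,d}(E_\QQ) \times \PR V_m : n z = \pdiv(f)\}$$
is closed in the ambient product, being cut out by the equality of two morphisms into a symmetric product. Since $\PR V_m$ is proper, the first projection has closed image $\bcZ_{d,m}^n$, and $\bcZ_d^n = \bigcup_m \bcZ_{d,m}^n$ because any rational function witnessing $n z \sim 0$ lies in some $\PR V_m$.

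The main obstacle is the uniform parametrisation in the second approach: one must choose the family $(\bcL_m)_{m\geq 0}$ so that every rational function which can witness $n z = \pdiv(f)$ for some $z \in \bcZ_d^n$ lies in $\PR V_m$ for some $m$, and because the pole support of such $f$ moves with $z$ this really forces one to work with a relative linear system (or a cofinal family of line bundles) over $C^1_{d,d}(E_\QQ)$. For the curve $E_\QQ$ this technicality is neatly sidestepped by the Abel--Jacobi argument above, which would be my preferred route to rigour.
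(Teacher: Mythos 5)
Your proposal is correct in substance but takes a genuinely different route from the paper. The paper follows the Roitman--Koll\'ar style ``countability of rational equivalence'' argument (as in \cite{BH}): it parametrises equivalences by morphisms $f\in\Hom^v(\PR^1,C^1_{d+u,u})$ with $f(0)=(A+C,C)$, $f(\infty)=(B+D,D)$, decomposes the locus as $W_d=\cup_{u,v}W_d^{u,v}$ indexed by the degree $u$ of the auxiliary divisor and the degree $v$ of the map, and then shows via the identity $W_d^{u,v}=pr_{1,2}(\wt{s}^{-1}(W^{0,v}_{d+u}\times W^{0,v}_u))$ and a curve--normalization--spreading argument that the closure of each piece stays inside $W_d$. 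Your first argument instead exploits the special structure of an elliptic curve: the Abel--Jacobi (sum) morphism $\Sym^d E_\QQ\times\Sym^d E_\QQ\to E_\QQ$ and the fact that $E_\QQ[n]$ is a finite closed subscheme show that $\bcZ_d^n$ is a \emph{single} Zariski closed subset of the generic fibre $C^1_{d,d}(E_\QQ)$ --- a cleaner and strictly stronger conclusion than the paper's, but one that does not generalise to higher-dimensional varieties or higher-codimension cycles, which is precisely the generality the paper says it wants to preserve. Your second argument is the one closer in spirit to the paper's, and the obstacle you correctly identify (the pole divisor of a witnessing function moves with $z$, so no single fixed linear system $\PR V_m$ suffices; one needs a relative linear system or an auxiliary divisor added to both sides) is exactly what the paper's extra index $u$ and the passage from $C^1_{d,d}$ to $C^1_{d+u,u,d+u,u}$ are designed to absorb. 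One caveat applying to both you and the paper: the set $\bcZ_d^n$ as written lives entirely over the generic point of $U$, so closedness in the generic fibre must still be promoted to the total space $C^1_{d,d}(E_\ZZ/U)$ by taking Zariski closures and spreading out, which the paper does only at the very end of its proof; you should make that step explicit if you keep the Abel--Jacobi route.
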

\begin{proof}
This theorem for a surface fibered over $\PR^2$  is proved in the paper by Banerjee and Hoque in \cite{BH}, but we reconstruct the proof for the curve case here for convenience and completeness.
 There are some crucial points to be noted here.
\\
(I) The notion of Hilbert scheme and the Hom scheme makes sense for an arithmetic variety. This is as explained in \cite{FGA}[Chapter: Hilbert schemes and Quot schemes,  5] FGA.
\\
(II) The family of Weil divisors of a smooth fibration over $\Spec(\ZZ)$ is parameterized by a Chow variety, which is actually given by the Picard scheme parameterizing relative Cartier divisors of the same family \cite[Corollary 11.8]{Ry}. In our case the family is $E_{\ZZ}$ which is of finite presentation over $\ZZ$ and is a standard smooth algebra\footnote{in the sense, stack exchange \cite{St}, Definitions 10.136.6 and 29.32.1.} over $\ZZ$. This enables us to formulate the definition of rational equivalence for arithmetic varieties as in \cite[\S 3.3]{GS} in the following way:

Two Weil divisors $D_1,D_2$ are rationally equivalent on a fiber $E_{\QQ}$, if there exists a morphism $$f: \PR^1_{U}\to C^1_{d,d}(E_{\ZZ}/\PR^1_{U})$$ such that
$(f\circ 0)|_{\QQ}=D_1+B$ and $ (f\circ \infty)|_{\QQ}=D_2+B$,
where $B$ is a positive Weil divisor and $0,\infty$ are two fixed sections from $U$ to  $\PR^1_{U}$.

Let us assume that the divisor $D=D^+-D^-$ is rationally equivalent to zero. This means that there exists a map $$f:\PR^1\to C^1_{d,d}({E_{\QQ}})$$ such that
$$f(0)=D^{+}+\gamma\text{ and }f(\infty)=D^{-}+\gamma,$$
where $\gamma$ is a positive divisor in ${E_{\QQ}}$.
In other words, we have the following map:
$$\ev:Hom^v(\PR^1_{U},C^1_{d}(E_{\ZZ}/U))\to C^1_{d}(E_{\ZZ}/U)\times C^1_{d}(E_{\ZZ}/U) $$
 given by $f\mapsto (f(0),f(\infty))$ and that the generic fiber of  $f$ at $\QQ$ is contained in $C^1_{d,d}(E_{\QQ})$.

Let us denote $C^1_{d}(E_{\ZZ}/U)$ by $C^1_d(E_{\ZZ})$ for simplicity.
We now consider the subscheme $U_{v,d}(E_{\ZZ})$ of $ U\times\Hom^v(\PR^1_{U},C^1_{d}(E_{\ZZ}))$ consisting of the pairs $(b,f)$ such that image of $f$ is contained in $C^1_{d}(E_{b})$ (such a universal family exists, for example see \cite[Theorem 1.4]{Ko} or \cite{FGA}, [Chapter on Hilbert schemes and Quot schemes]. Consider its fiber over $\QQ$. This gives a morphism from $U_{v,d}(E_{\ZZ})_{\QQ}$ the generic fiber to
$$ C^1_{d,d}(E_{\QQ})$$
 defined by $$f\mapsto (f(0),f(\infty)).$$
Again, we consider the closed subscheme $\bcV_{d,d}$ of $U\times C^1_{d,d}(E_{\ZZ})$ given by $(b,z_1,z_2)$, where $(z_1,z_2)\in C^1_{d,d}(E_b)$. Suppose that the map from $\bcV_{d,u,d,u}$ to $\bcV_{d+u,u,d+u,u}$ is given by
$$(b,A,C,B,D)\mapsto (b,A+C,C,B+D,D).$$
Let us denote the fiber product by $\bcV$ of $U_{v,d}(E_{\ZZ})$ and $\bcV_{d,u,d,u}$ on $\bcV_{d+u,u,d+u,u}$. Consider the fiber $\bcV_{\QQ}$. If we consider the projection from $\bcV_{\QQ}$ to $ C^1_{d,d}(E_{\QQ})$, then we observe that $A$ and $B$ are supported and rationally equivalent in $E_{\QQ}$. Conversely, if $A$ and $B$ are supported as well as rationally equivalent on $E_{\QQ}$, then we get the map $$f:\PR^1_{\QQ}\to C^1_{d+u,u,d+u,u}(E_{\QQ})$$ of some degree $v$ satisfying
$$f(0)=(A+C,C)\text{ and } f(\infty)=(B+D,D),$$
where $C$ and $D$ are supported on $E_{\QQ}$. This implies that the image of the projection from $\bcV_{\QQ}$ to $C^1_{d,d}(E_{\QQ})$ is a quasi-projective subscheme $W_{d}^{u,v}$ consisting of the tuples $(b,A,B)$ such that $A$ and $B$ are supported on $E_{\QQ}$, and that there exists a map $$f:\PR^1_{\QQ}\to C^1_{d+u,u}(E_{\QQ})$$ such that
$$f(0)=(A+C,C)$$
 and
 $$f(\infty)=(B+D,D)\;.$$
  Here $f$ is of degree $v$, and $C,D$ are supported on $E_b$ and they are of co-dimension $1$ and degree $u$ cycles. This shows that $W_d$ is the union $\cup_{u,v} W_d^{u,v}$. We now prove that the Zariski closure of $W_d^{u,v}$ is in $W_d$ for each $u$ and $v$. For this, we prove the following:
$$W_d^{u,v}=pr_{1,2}(\wt{s}^{-1}(W^{0,v}_{d+u}\times W^{0,v}_u)),$$
where
$$\wt{s}:  C^1_{d,d,u,u}(E_{\QQ})\to  C^1_{d+u,d+u,u,u}(E_{\QQ})$$
defined by
$$\wt{s}(A,B,C,D)=(A+C,B+D,C,D).$$

We assume $(A,B,C,D)\in  C^1_{d,d,u,u}(E_{\QQ})$ in such a way that $\wt{s}(A,B,C,D)\in W^{0,v}_{d+u}\times W^{0,v}_u$. This implies that there exists an element
$$(b,g)\in \PR^1_{\QQ}\times\Hom^v(\PR^1_{\QQ},C^p_{d+u}(E_{\QQ}))$$ and an element
$$(b,h)\in \Hom^v(\PR^1_{\QQ},C^p_{u}(E_{\QQ}))$$ satisfying $$g(0)=A+C,~g(\infty)=B+D \text{ and } h(0)=C,h(\infty)=D$$ as well as the image of $g$ and $h$ are contained in $C^1_{d+u}(E_b)$ and  $C^1_u(E_b)$ respectively.

Also, if $f=g\times h$ then $f\in \Hom^v(\PR^1_{\QQ},C^1_{d+u,u}(E_{\QQ}))$ is such that the image of $f$ is contained in $C^1_{d+u,u}(E_{\QQ})$ and also satisfies the following:
$$f(0)=(A+C,C)\text{ and }(f(\infty))=(B+D,D).$$
This shows that $(A,B)\in W^d_{u,v}$.

On the other hand, if we assume that $(A,B)\in W^d_{u,v}$, then there exists $f\in \Hom^v(\PR^1_{\QQ},C^1_{d+u,u}(E_{\QQ}))$ such that
$$f(0)=(A+C,C)\text{ and }f(\infty)=(B+D,D),$$
and image of $f$ is contained in the Chow scheme of $\bar{E_{\QQ}}$.

We now compose $f$ with the projections to $C^1_{d+u}(E_{\QQ})$ and to $C^1_{u}(E_{\QQ})$ to get a map $g\in \Hom^v(\PR^1_{\QQ},C^1_{d+u}(E_{\QQ}))$ and a map $h\in\Hom^v(\PR^1_{\QQ},C^1_{u}(E_{\QQ}))$ satisfying
$$g(0)=A+C,\quad g(\infty)=B+D$$
and
$$h(0)=C,\quad h(\infty)=D.$$
Also, the image of $g$ and $h$ are contained in the respective Chow varities of the fiber $E_{\QQ}$. Therefore, we have
$$W_d=pr_{1,2}(\wt{s}^{-1}(W_{d+u}\times W_u)).$$

We are now in a position to prove that the closure of $W_d^{0,v} $ is contained in $W_d$. Let $(A,B)$ be a closed point in the closure of ${W_d^{0,v}}$. Let $W$ be an irreducible component of ${W_d^{0,v}}$ whose closure contains $(A,B)$. We assume that $U'$ is an affine neighborhood of $(A,B)$ such that $U'\cap W$ is nonempty. Then there is an irreducible curve $C'$ in $U'$ passing through $(A,B)$. Let $\bar{C'}$ be the Zariski closure of $C'$ in $\overline{W}$. The map
$$e:U_{v,d}(E_{\ZZ})\subset {U}\times \Hom^v(\PR^1_{U},C^1_{d}(E_{\ZZ}))\to C^1_{d,d}(E_{\ZZ})$$
given by
$$(b,f)\mapsto (b,f(0),f(\infty))$$
is regular and if we take the generic fiber of this map, $W_d^{0,v}$ is its image. We now choose a curve $T$ in $U_{v,d}(E_{\QQ})$ such that the closure of $e(T)$ is $\bar C'$.  Let $\wt{T}$  denote the normalization of the Zariski closure of $T$, and $\wt{T_0}$ be the preimage of $T$ in this normalization. Then the regular morphism $\wt{T_0}\to T\to \bar C'$ extends to a regular morphism, when the scalar extends to the field of algebraic numbers. Let this morphism be $\wt{T}_{\QQ}$ to $\bar C'_{\QQ}$. If $(f_{\QQ})$ is a preimage of $(A_{\QQ},B_{\QQ})$, then
$$f_{\QQ}(0)=A_{\QQ}, \quad f_{\QQ}(\infty)=B_{\QQ}$$ and the image of $f_{\QQ}$ is contained in $C^p_{d}(E_{\QQ})$. %$U_{v,d}(\bar{\wt{S})$
Spreading out $f_{\QQ}$, we have an $f$ such that $$f(0)=A, \quad f(\infty)=B\;.$$
 There is a one-to-one correspondence between $\Spec(\bar\ZZ)$ points of arithmetic varieties and $\bar Q$ points of the corresponding variety over $\bar Q$. Therefore, $A$ and $B$ are rationally equivalent. This completes the proof.
\end{proof}

\textit{Proof of main theorem:}

The condition that the elements are in the rank part of $E(\QQ)$ indicates (by the previous theorem) that they are parametrized by the complement $\cup_{d,n}Z_d^n=Z$. This complement is a countable intersection of Zariski open subsets of the relative Chow variety. Now, we know that there is an infinite order element of the group $E(\QQ)$, so the countable intersection is non-empty. Then if we spread it out over $E_{\ZZ}$ we get a non-trivial element in the Chow group of co-dimension one cycles $CH^1(E_{\ZZ})$. Suppose that for all number fields of the form $\QQ(\sqrt{p^3+ap+b})$ it is zero in the class group of the given number field for $p$ in a Zariski open set $U$ in $\Spec(\ZZ)$. In that case the element say $D$ is such that 
$$D_p=div(f)$$
for a rational function $f$ on the ring of integers of the number field. Since it happens for all such number fields we have a collection of rational functions on some open sets $U_i\subset E_{\ZZ}$, such that $(U_i,f_i)$ forms a Cartier divisor and we have a Line bundle corresponding to $D$ which is trivial on the union $\cup_i U_i$, so that $D$ is rationally equivalent to zero on $CH^1(E_U)$.

Let us give some details on this. We have $f$ is a rational function on the ring of integers, we have $D_p=div(f)$ which gives that $D$ is rationally equivalent to zero on some Zariski open $U$ in $\bcO_K$. This is because $$cl(K)\supset\varprojlim CH^1(E_U)$$ where $E_U$ is the pull-back of $E_{\ZZ}$ over $U\subset Spec(\bcO_K)$. The cycle $D_p$ is in $\varprojlim CH^1(E_U)$. Now $\pi:\Spec(\bcO_K)\to \Spec(\ZZ)$ is a finite morphism. Hence, if we take a open set $V\subset Spec(\ZZ)$ and pull-it back to $U$, we have $D$ restricted to such an open set is rationally equivalent to zero and hence the push-down to $CH^1(E_V)$ is rationally equivalent to zero as well. Hence, we have, by the push-forward pullback formula, 
$$\pi_*\pi^*(D)=nD=0$$
in $Pic^0(E)(\QQ)\cong E(\QQ)$.
Here we assume that the elliptic curve has a $\QQ$ point. Since $D$ is of infinite order, this gives a contradiction as $nD=0$ and hence we have $D_p$ in $Cl(K)$ is non-trivial for infinitely many primes $p$. So, there exists infinitely many $p$ such that $\QQ(\sqrt{p^3+ap+b})$ has class number larger than one.

\subsection*{Acknowledgements}The author thanks SRM AP for hosting the project. The author thanks the anonymous referee for suggestions to improve the paper.

\end{document}